\DeclareRobustCommand{\shortto}{%
	\mathrel{\mathpalette\short@to\relax}%
}
\newcommand{\short@to}[2]{%
	\mkern2mu
	\clipbox{{.5\width} 0 0 0}{$\m@th#1\vphantom{+}{\shortrightarrow}$}%
}
\theoremstyle{definition}
\newtheorem{definition}{Definition}[section]
\newtheorem{example}[definition]{Example}
\newtheorem{remark}[definition]{Remark}
\newtheorem{corollary1}[definition]{Corollary}
\newtheorem{lemma1}[definition]{Lemma}
\newtheorem{theorem1}[definition]{Theorem}
\newtheorem{Proposition}[definition]{Proposition}
\newtheorem*{proposition*}{Proposition}
\title{Contact geometry of Hill's approximation in a spatial restricted four-body problem}
\author{Cengiz Aydin}
\begin{document}
	
\setcounter{page}{1}
\pagenumbering{arabic}
	
\maketitle
	
\begin{abstract}
It is well-known that the planar and spatial circular restricted three-body problem (CR3BP) is of contact type for all energy values below the first critical value.\ Burgos-García and Gidea extended Hill's approach in the CR3BP to the spatial equilateral CR4BP, which can be used to approximate the dynamics of a small body near a Trojan asteroid of a Sun--planet system.\ Our main result in this paper is that this Hill four-body system also has the contact property.\ In other words, we can ``contact'' the Trojan.\ Such a result enables to use holomorphic curve techniques and Floer theoretical tools in this dynamical system in the energy range where the contact property holds.\
\end{abstract}

\begin{center}
\begin{tabular}{ll}
	\textbf{Keywords} & $\quad$ four-body problem $\cdot$ Hill's approximation $\cdot$ celestial mechanics $\cdot$ contact geometry \\
	\textbf{MSC 2020} & $\quad$ 70G45 $\cdot$ 70F10 $\cdot$ 53D35
\end{tabular}
\end{center}
		
\tableofcontents
		
\section{Introduction}


\textbf{Astronomical significance.}\ One of the first triumphs in celestial mechanics was the Lagrange central configuration, one of the first explicit solutions to the three-body problem discovered by Lagrange \cite{lagrange} in 1772.\ It consists of three bodies, not necessarily of equal masses, forming the vertices of an equilateral triangle, each moving on a specific Kepler orbit.\ The triangular configuration of the bodies is maintained throughout the entire motion.\ A~special type of Lagrange's solution is the rigid circular motion of the three bodies around their center of mass.\ It is common to use the term ``Trojan'' to describe a small body, an asteroid or a natural satellite, that lies in such equilateral triangular configuration together with the Sun and a planet, or with a planet and a moon.\ In other words, such small bodies remain near triangular points 60° ahead of or behind the orbit of a planet or a moon.\ Such triangular points correspond to the two equilateral Lagrange points, $L_4$ (leading) and $L_5$ (trailing), of a Sun--planet or a planet--moon system.\ Since the discovery of the first Trojan asteroid, 588 Achilles, near Jupiter's Lagrange point $L_4$ by Max Wolf of the Heidelberg Observatory in 1906 (see \cite{nicholson}), such configurations have not only deserved attention in theory, but have also gained tremendous astronomical significance.\ By now many other examples of Trojan-like asteroids in our solar system have become known.\ Jupiter has thousands of Trojans \cite{stacey_connors}; Mars \cite{connors} and Neptune \cite{almeida} also have some; only two Earth Trojans have been discovered so far \cite{santana}.\ Meanwhile it is known \cite{murray} that the Saturn--Tethys system has two Trojans, Telesto ($L_4$-Trojan) and Calypso ($L_5$-Trojan), and the Saturn--Dione system has two as well, Helene ($L_4$-Trojan) and Polydeuces ($L_5$-Trojan).\ A twelve-year space probe to several Jupiter Trojans is currently being operated by NASA's Lucy mission, which was launched on 16 October 2021 as the first mission to the Jupiter Trojans (see e.g.,\ \cite{olkin} for a recent research result).\ Outside the solar system there exists also the possibility of a Trojan planet associated to extrasolar systems, formed by a star with similar mass as the Sun and a giant gas planet.\ Although such Trojan planets only play a fictitious role at the moment, their dynamics are already being analyzed theoretically \cite{schwarz_dvorak}.\

In order to describe conveniently the dynamics of small bodies attracted by the gravitational field of three bodies in such a triangular central configuration, a restricted four-body problem (R4BP) becomes necessary.\ There are plenty of results on various models of the R4BP, such as \cite{alvarez_vidal}, \cite{baltagiannis_2011}, \cite{baltagiannis_2013}, \cite{burgos_delgado,burgos_celletti,burgos_gidea}, \cite{cronin}, \cite{gabern_jorba}, \cite{kumari}, \cite{michalodimitrakis}, \cite{moulton}, \cite{scheeres}, \cite{steves_roy}.\ Relevant for this work is the spatial equilateral circular one, in which three primaries circle around their common center of mass and forming an equilateral triangular configuration.\ In view of astronomical data associated to such configurations in the solar system, the mass of one of the primaries (the Trojan) is much smaller than the other two primaries.\ If one equates the mass of the Trojan to zero, the system represents the circular restricted three-body problem (CR3BP).\ Therefore, to study the dynamics in the vicinity of the Trojan, a practical and intelligent concept is to perform a Hill's approximation in the equilateral circular R4BP.\vspace{0.5em}\\ 
\textbf{Hill's approximation.}\ In 1878 Hill \cite{hill} introduced a limiting case of the CR3BP, as an approach to solve the motion of the Moon in the Sun--Earth problem.\ As a first approximation, the infinitesimal body (Moon) moves in the vicinity of the smaller primary (Earth) and, by a symplectic rescaling of coordinates, the remaining primary (Sun) is pushed infinitely far away in a way that it acts as a velocity independent gravitational perturbation of the rotating Kepler problem formed by the Earth and the Moon.\

Extending Hill's concept to the equilateral circular R4BP was performed by Burgos-García and Gidea \cite{burgos_gidea}, which is the central system in this paper.\ This problem studies the dynamics near the Trojan and pushes the two remaining primaries (e.g.,\ Sun and Jupiter) to infinity, and depends on two parameters, the energy of the system and the mass ratio $\mu \in [0,\frac{1}{2}]$ of the two primaries at infinity (system is symmetric with respect to $\mu = \frac{1}{2}$).\ The case $\mu = 0$ corresponds to the classical Hill 3BP, therefore this Hill four-body model generalizes the classical Hill's approach.\ It is worth noting that this system is different as the one introduced by Scheeres \cite{scheeres}, in which the motion of a spacecraft in the Sun perturbed Earth--Moon system is studied.\ Moreover, this Hill four-body system was extended in \cite{burgos_celletti} as a problem with oblate bodies modeling the Sun--Jupiter--Hektor--Skamandrios system.\vspace{0.5em}\\
\textbf{Why we care about contact property.}\ One of Hill's main contributions was the discovery of one periodic solution with a period of one synodic month of the Moon.\ Hill's lunar theory was, as Wintner said \cite[p.\ 1]{wintner}, \textit{``considered by Poincaré as representing a turning point in the history of celestial mechanics''}.\ Poincaré sought to make periodic solutions central in the study of the global dynamics, a focus that has persisted since his pioneering work \cite{poincare}.\ Inspired by Poincaré's concept of using global surface of sections for proving existence results of periodic orbits in the CR3BP \cite{poincare_2}, Birkhoff conjectured \cite{birkhoff} that retrograde periodic orbits in the CR3BP bound a disk-like global surface of section (retrograde means that the motion of the orbit is in opposite direction as the coordinate system is rotating; direct is the one that rotates in the same direction).\ Due to preservation of an area form with finite total area, one can apply Brouwer's translation theorem to the Poincaré return map associated to the disk-like global surface of section and find at least one fixed point that should correspond to a direct periodic orbit.\ The direct orbit is astronomically more significant, since our Moon moves in a direct motion around the Earth, whose existence is based so far on numerical computations, as Hill's lunar orbit.\ Such fixed point approaches related to existence results of periodic orbits are sources of inspiration that have laid the fruitful fundamental principles of powerful abstract methods and important breakthroughs in symplectic and contact geometry, such as the work by Floer \cite{floer} on the Arnold conjecture, by Hofer \cite{hofer} and Taubes \cite{taubes} on the Weinstein conjecture, and by Hofer--Wysocki--Zehnder \cite{hofer_w_z} on the construction of disk-like global surface of sections with the help of holomorphic curves.\ The assumption that energy level sets are of contact type enable to use holomorphic curve and Floer theoretical techniques in the energy range where the contact property holds.\ Especially, many recent deep results associated with the dynamics of low-dimensional contact manifolds have been proved:\ In the 3-dimensional case, \cite{colin_dehornoy_rechtman} proved the existence of supporting broken book decompositions, \cite{colin_dehornoy_hryniewicz_rechtman} showed how to use these broken book decompositions to construct Birkhoff sections or global surfaces of section, \cite{cristofaro_hryn_hutchings_liu} gave a detailed description of the dynamics when there are exactly two simple Reeb orbits, and \cite{irie} described an abstract framework for proving strong closing properties based on the smooth closing lemma for Reeb flows; in the 5-dimensional case, \cite{moreno_koert} showed the relation between the spatial dynamics of the CR3BP and iterated open book decompositions.\ We also refer to \cite{frauenfelder_koert} for a profound introduction to holomorphic techniques and their use in celestial mechanics, particularly focused on the CR3BP and the above Birkhoff's conjecture.\ Another dynamical consequence of the contact property of energy level sets, discussed in the latter reference, is that blue sky catastrophes cannot occur.\ On a practical level, Floer theoretic bifurcation tools have recently been applied to numerical investigations of periodic orbits \cite{aydin_babylonian}, \cite{aydin_cz}, \cite{moreno_aydin_koert_frauenfelder_koh}.\vspace{0.5em}\\
\textbf{Main result.}\ For the planar CR3BP it is well-known that below the first critical value, the two bounded components of the energy level sets, after Moser regularization, are of contact type \cite{albers_frauenfelder_koert_paternain}.\ Each component corresponds to the unit cotangent bundle of $S^2$ with the standard contact structure, meaning that each contact manifold corresponds to $(S^*S^2,\xi_{st})$.\ The same result for the spatial case was shown in \cite{cho_jung_kim}, where each contact manifold corresponds to $(S^*S^3,\xi_{st})$.\ We note that \cite[Chapter 6.1]{frauenfelder_koert} proved the same result for the classical planar Hill 3BP.\

The Hill four-body system we consider has four Lagrange points, where $L_1$ is symmetric to $L_2$ (lying on the $x$-axis), and $L_3$ is symmetric to $L_4$ (lying on the $y$-axis).\ If the energy value $c$ is below the first critical value $H(L_{1/2})$, then the energy level set has one bounded component (where the origin is contained), which we denote by $\Sigma_c^b$.\ This component is non-compact because of a singularity at the origin corresponding to collision.\ After performing Moser regularization, we obtain a compact 5-dimensional manifold, which we denote by $\widetilde{\Sigma}_c^b$.\ The spatial system is invariant under a symplectic involution $\sigma$ which is induced by the reflection at the ecliptic.\ The restriction of the spatial problem to the fixed point set Fix($\sigma$) corresponds to the planar problem.\ In fact, we can restrict the whole procedure to Fix($\sigma$) and obtain a compact 3-dimensional manifold,
which we denote by $\widetilde{\Sigma}_c^b|_{\text{Fix}(\sigma)}$.\ Our main result in this paper is the following theorem.\
\begin{theorem1}\label{theorem}
	\textit{For any given $\mu \in [0,\frac{1}{2}]$ it holds that}
	\begin{align*}
		\widetilde{\Sigma}_c^b &\cong (S^*S^3,\xi_{st}),\quad \textit{if } c < H(L_{1/2}), \\
		\widetilde{\Sigma}_c^b|_{\text{Fix}(\sigma)} &\cong (S^*S^2,\xi_{st}),\quad \textit{if } c < H(L_{1/2}).
	\end{align*}
\end{theorem1}
Our method to prove Theorem \ref{theorem} is the same as in \cite{albers_frauenfelder_koert_paternain}, \cite{cho_jung_kim}, namely we find a Liouville vector field on the cotangent bundle which is transverse to $\widetilde{\Sigma}_c^b$ whenever $c < H(L_{1/2})$.\ This transversality result implies the contact property.\ The Liouville vector field we use is inspired by Moser regularization, which first interchanges the roles of position and momenta, and then uses the stereographic projection.\ In this setting, the Liouville vector field is the natural one (i.e.,\ the radial vector field in fiber direction) on the new cotangent bundle structure after switching position and momenta.\ Therefore, our transversality result implies in particular fiberwise starshapedness.\vspace{0.5em}\\
\textbf{Organization of the paper.}\ In Section \ref{sec2} we discuss the Hamiltonian, its linear symmetries, Lagrange points and Hill's regions.\ The goal of Section \ref{sec3} is to prove Theorem \ref{theorem}.\ We first recall some basic definitions and notations from contact geometry, and then show transversality in the non-regularized case.\ After this, we perform Moser regularization and prove therein the transversality property.\

\section{Hill's approximation in the spatial equilateral circular R4BP}
\label{sec2}

\subsection{Hamiltonian}

We consider three point masses (primaries), $B_1$, $B_2$ and $B_3$, moving in circular periodic orbits in the same plane with constant angular velocity around their common center of gravity fixed at the origin, while forming an equilateral triangle configuration (see Figure \ref{figure_1_r4bp}).\ A fourth body $B_4$ is significantly smaller than the other three and thus a negligible effect on their motion.\ We set $B_1$ on the negative $x$-axis at the origin of time and assume that the corresponding three masses are $m_1 \geq m_2 \geq m_3$.\ It is convenient to choose the units of mass, distance and time such that the gravitational constant is~1, and the period of the circular orbits is $2\pi$.\ In these units the side length of the equilateral triangle configuration is normalized to be one, and $m_1 + m_2 + m_3 = 1$.\ Moreover, it is convenient to use a rotating frame of reference that rotates with an angular velocity of the orbital angular rate of the primaries.\ Then, the dynamics of the infinitesimal body $B_4$ is described by the Hamiltonian
$$ H(x,y,z,p_x,p_y,p_z) = \frac{1}{2} \left( p_x^2 + p_y^2 + p_z^2 \right) - \frac{m_1}{r_1} - \frac{m_2}{r_2} - \frac{m_3}{r_3} + p_x y - p_y x, $$
which is a first integral of the system.\ An equivalent first integral is the Jacobi integral $C$ defined by $C = -2H$.\ Notice that $r_i$ indicates the corresponding distance from $B_4$ to $i$-th primary, for $i=1,2,3$.\ The general expressions of the position coordinates $(x_i,y_i,0)$ can be seen in \cite{baltagiannis_2013}.\ If $m_3=0$ and $m_2 = \mu$, then one recovers the constellation of the CR3BP associated to $B_1$ and $B_2$, where $B_3$ is located at the equilateral Lagrange point $L_4$.\ Moreover, the phase space is the trivial cotangent bundle $T^* \left( \mathbb{R}^3 \setminus \{ B_1,B_2,B_3 \} \right) =  \left( \mathbb{R}^3 \setminus \{ B_1,B_2,B_3 \} \right) \times \mathbb{R}^3$, endowed with the standard symplectic form $\omega = \sum d p_k \wedge d k$ ($k=x,y,z$).\ The flow of the Hamiltonian vector field $X_H$, defined by $dH(\cdot) = \omega( \cdot , X_H )$, is equivalent to the equations of motion, $\left\{ \dot{k} = \frac{\partial H}{\partial p_k}, \dot{p}_k = - \frac{\partial H}{\partial k} \right\}$ ($k=x,y,z$).\
\begin{figure}[t]
	\centering
	\includegraphics[scale=1]{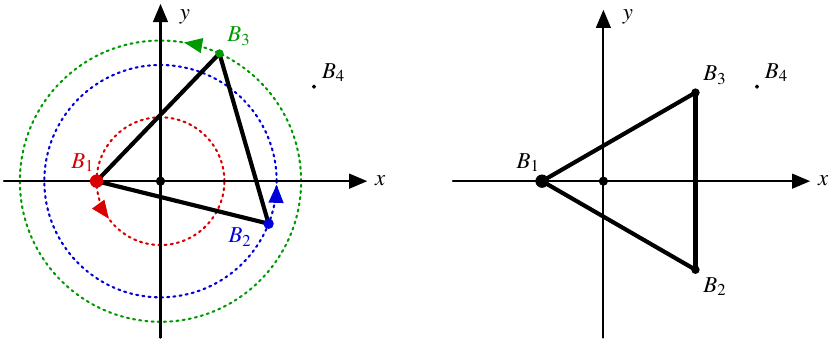}
	\caption{Equilateral circular restricted four-body problem.\ Left:\ Case of $m_1 > m_2 > m_3$.\ Right:\ Case of $m_2 = m_3$ in a rotating frame of reference; $B_2$ and $B_3$ are located symmetrically with respect to $B_1$.}
	\label{figure_1_r4bp}
\end{figure}

We now briefly recall the fundamental steps of Hill's approximation, as performed in \cite{burgos_gidea} where the details can be seen.\ Let $B_3$ be the primary (the Trojan), whose mass is much smaller than the other two primaries.\ The first step is to set the Trojan to the origin.\ The second step rescales symplectically the coordinates depending on $m_3^{1/3}$.\ The third step makes use of a Taylor expansion of the gravitational potential of the Hamiltonian in powers of $m_3^{1/3}$.\ Finally, the limiting case for $m_3 \to 0$ yields the Hamiltonian
$$ H (x,y,z,p_x,p_y,p_z) = \frac{1}{2} \left( p_x^2 + p_y^2 + p_z^2 \right) + p_x y - p_y x - \frac{1}{r} + \frac{1}{8}x^2 - \frac{3\sqrt{3}}{4}(1-2\mu)xy -\frac{5}{8}y^2 + \frac{1}{2}z^2, $$
where $r = \left( x^2 + y^2 + z^2 \right)^{\frac{1}{2}}$, $m_1 = 1 - \mu$ and $m_2 = \mu$.\ Notice that if one expands the Hamiltonian of the CR3BP centered at the equilateral Lagrange point $L_4$, then the quadratic part corresponds to $H + 1/r$.\\
Furthermore, after applying a rotation in the $xy$-plane, the system is equivalent with the system characterized by the Hamiltonian
\begin{align}\label{hamiltonian_1}
	H (x,y,z,p_x,p_y,p_z) = \frac{1}{2} \left( p_x^2 + p_y^2 + p_z^2 \right) + p_x y - p_y x - \frac{1}{r} + a x^2 + b y^2 + \frac{1}{2}z^2,
\end{align}
\begin{figure}[t]
	\centering
	\includegraphics[scale=1]{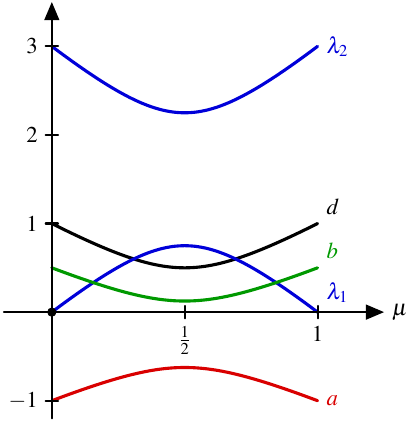}
	\caption{The quantities $a$ (red), $b$ (green), $\lambda_1$, $\lambda_2$ (both blue) and $d$ (black).}
	\label{figure_2_quantities}
\end{figure}
where
$$ a = \frac{1}{2}(1 - \lambda_2),\quad b = \frac{1}{2} (1-\lambda_1),\quad \lambda_1 = \frac{3}{2}(1-d),\quad \lambda_2 = \frac{3}{2}(1+d),\quad d = \sqrt{1 - 3\mu + 3\mu^2}. $$
Since $d(1-\mu)=d(\mu)$, we can assume that $\mu \in [0,\frac{1}{2}]$.\ Notice that $\lambda_1$ and $\lambda_2$ are the eigenvalues corresponding to the rotation transformation in the $xy$-plane.\ The quantities $a,b,\lambda_1,\lambda_2$ and $d$ are plotted in Figure \ref{figure_2_quantities}.\ The Hamiltonian (\ref{hamiltonian_1}) consists of the rotating Kepler problem (formed by the Trojan and the infinitesimal body) with a velocity independent gravitational perturbation produced by the two remaining massive primaries (the degree 2 term $a x^2 + b y^2 + \frac{1}{2}z^2$) which are sent at infinite distance.\ By introducing the effective potential
\begin{align}\label{effective_potential}
	U \colon \mathbb{R}^3 \setminus \{ 0 \} \to \mathbb{R},\quad (x,y,z) \mapsto - \frac{1}{r} - \frac{1}{2} \left( \lambda_2 x^2 + \lambda_1 y^2 - z^2 \right),
\end{align}
the Hamiltonian (\ref{hamiltonian_1}) can be written as
\begin{align}\label{hamiltonian_2}
	H (x,y,z,p_x,p_y,p_z) = \frac{1}{2} \left( (p_x + y)^2 + (p_y - x)^2 + p_z^2 \right) + U(x,y,z),
\end{align}
and the equations of motion are given by
\begin{align}\label{hamiltonian_equation}
	\ddot{x} - 2 \dot{y} &= - \frac{\partial U}{\partial x} =  \left(\lambda_2 - \frac{1}{r^3} \right)x \nonumber\\
	\ddot{y} + 2 \dot{x} &= - \frac{\partial U}{\partial y} = \left(\lambda_1 - \frac{1}{r^3} \right)y\\
	\ddot{z} &= - \frac{\partial U}{\partial z} = - \left(1 + \frac{1}{r^3} \right)z. \nonumber
\end{align}
In particular, the case $\mu = 0$ recovers the classical Hill 3BP.\ While the Hill 3BP depends only on the energy of the orbit, this systems depends on two parameters, the mass ratio $\mu$ and the energy of the system.\ Specific $\mu$-values of practical interest are for example $\mu = 0.00095$, which approximates the Sun--Jupiter mass ratio, and $\mu = 0.00547$, which corresponds to the extrasolar system associated to the Sun-like star HD 28185 and its Jupiter-like exoplanet HD 28185 b.\

\subsection{Linear symmetries}

A ``symmetry'' $\sigma$ is, by definition, a symplectic or anti-symplectic involution of the phase space which leaves the Hamiltonian invariant, i.e.,\
\begin{align}\label{symmetry_equations}
	H \circ \sigma = H,\quad \sigma^2 = \text{id}, \quad \sigma^* \omega = \pm \omega.
\end{align}
Anti-symplectic symmetries denote time-reversal symmetries in the Hamiltonian context, see e.g., \cite{lamb_roberts}.\ A periodic solution $\mathbf x \equiv (x,y,z,p_x,p_y,p_z)$ is symmetric with respect to an anti-symplectic symmetry $\rho$ if $\mathbf x (t) = \rho \left( \mathbf x (-t) \right)$ for all $t$, and symmetric with respect to a symplectic one $\sigma$ if $\mathbf x (t) = \sigma \left( \mathbf x (t) \right)$ for all $t$.\

The reflection at the ecliptic $\{z = 0\}$ gives rise to a linear symplectic symmetry of (\ref{hamiltonian_1}), denoted by
\begin{align}\label{sigma}
	\sigma (x,y,z,p_x,p_y,p_z) = (x,y,-z,p_x,p_y,-p_z),
\end{align}
whose fixed point set Fix$(\sigma) = \{ (x,y,0,p_x,p_y,0) \}$ corresponds to the planar problem.\ Other linear symplectic symmetries are $- \sigma$ and $\pm \text{id}$, where $-\sigma$ corresponds to the $\pi$-rotation around the $z$-axis, hence the $z$-axis is invariant under $-\sigma$.\ Linear anti-symplectic symmetries are determined by
\begin{itemize}[noitemsep]
	\item $\rho_1 (x,y,z,p_x,p_y,p_z) = (x,-y,-z,-p_x,p_y,p_z)$ ($\pi$-rotation around the $x$-axis),
	\item $\rho_2 (x,y,z,p_x,p_y,p_z) = (x,-y,z,-p_x,p_y,-p_z)$ (reflection at the $xz$-plane),
	\item $\rho_3 (x,y,z,p_x,p_y,p_z) = (-x,y,-z,p_x,-p_y,p_z)$ ($\pi$-rotation around the $y$-axis),
	\item $\rho_4 (x,y,z,p_x,p_y,p_z) = (-x,y,z,p_x,-p_y,-p_z)$ (reflection at the $yz$-plane).
\end{itemize}
Together with the previous linear symplectic symmetries, they form the group $\mathbb{Z}_2 \times \mathbb{Z}_2 \times \mathbb{Z}_2$.\ If one restrict the system to Fix($\sigma$), linear anti-symplectic symmetries for the planar problem are given by
\begin{itemize}[noitemsep]
	\item $\rho_x (x,y,0,p_x,p_y,0) = (x,-y,0,-p_x,p_y,0)$ (reflection at the $x$-axis),
	\item $\rho_y (x,y,0,p_x,p_y,0) = (-x,y,0,p_x,-p_y,0)$ (reflection at the $y$-axis),
\end{itemize}
that together with the linear symplectic ones $\{ \pm \text{id} \}$ form a Klein-four group $\mathbb{Z}_2 \times \mathbb{Z}_2$.\ These symmetries show that it is not possible to say which of the two primaries at infinity we are moving to or away from.\

\begin{remark}
	In \cite{aydin_sym} it shown that the Hill 3BP ($\mu = 0$) has two special properties.\
	\begin{itemize}[noitemsep]
		\item[i)] \textit{The spatial linear symmetries already determine the planar ones.}\ The same phenomenon is also true for all $\mu \in [0,\frac{1}{2}]$.\ To see this, let us denote by $\Sigma_s$ and $\Sigma_p$ each set of spatial and planar linear symmetries.\ Consider the projection map given by the restriction to $\text{Fix}(\sigma)$,
		$$ \pi \colon \Sigma_s \to \Sigma_p,\quad \rho \mapsto \rho | _{\text{Fix}(\sigma)}. $$
		If $\rho \in \Sigma_s$, then $\rho | _{\text{Fix}(\sigma)} \in \Sigma_p$ with the corresponding (anti-)symplectic property.\ While $\pi$ is not injective (since $\pi (\rho_1) = \pi (\rho_2)$), it is surjective.\ If $\rho \in \Sigma_p$ is symplectic (or anti-symplectic), then a symplectic (or anti-symplectic) extension is given by $z \mapsto z $ and $p_z \mapsto p_z$ (or $z\mapsto -z$ and $p_z \mapsto p_z$).\
		\item[ii)] \textit{There are no other linear symmetries.}\ This statement also holds for all $\mu \in [0,\frac{1}{2}]$.\ Its proof uses the equations (\ref{symmetry_equations}) and the properties of linear symplectic and anti-symplectic involutions.\ Since the exact same computations work for (\ref{hamiltonian_1}) for all $\mu \in [0,\frac{1}{2}]$, we forgo its proof in this paper.\ 
	\end{itemize}
\end{remark}

\subsection{Lagrange points and Hill's region}

From the third equation in (\ref{hamiltonian_equation}) it is obvious that all Lagrange points are located at the ecliptic $\{z=0\}$.\ Using the projection onto the configuration space given by
\begin{align}\label{projection}
	\pi \colon \mathbb{R}^3 \setminus \{ 0 \} \times \mathbb{R}^3 \to \mathbb{R}^3 \setminus \{ 0 \},\quad (x,y,z,p_x,p_y,p_z) \mapsto (x,y,z),
\end{align}
there is a one-to-one correspondence between critical points of the Hamiltonian (\ref{hamiltonian_2}) and the effective potential (\ref{effective_potential}), determined by $ \left( \pi |_{\text{crit}(H)} \right)^{-1} (x,y,0) = (x,y,0,-y,x,0)$.\ In \cite{burgos_gidea} it is shown that (\ref{effective_potential}) has four critical points, whose coordinates are given explicitly in terms of $\mu$,
$$ L_1 = \left(  \frac{1}{\sqrt[3]{\lambda_2}},0,0 \right),\quad  L_2 = \left( -  \frac{1}{\sqrt[3]{\lambda_2}},0,0 \right),\quad  L_3 = \left( 0, \frac{1}{\sqrt[3]{\lambda_1}},0 \right),\quad L_4 = \left( 0, - \frac{1}{\sqrt[3]{\lambda_1}},0 \right). $$
Note that $L_{1/2}$ are related to each other by $\rho_y$ (reflection at the $y$-axis), and $L_{3/4}$ are related to each other by $\rho_x$ (reflection at the $x$-axis).\ The classical Hill 3BP ($\mu=0$) only has $L_{1/2}$, and especially, if $\mu \to 0$ then $\lambda_1 \to 0$, which means that $L_3$ and $L_4$ are sent to infinity.\ Therefore, the presence of a second primary at infinity for $\mu \in (0,\frac{1}{2}]$ produces the two additional Lagrange points $L_{3/4}$.\ Since $\lambda_2 > \lambda_1$, we have for the critical values
$$ H(L_{1/2}) = - \frac{3}{2} \sqrt[3]{\lambda_2}  < - \frac{3}{2} \sqrt[3]{\lambda_1} =  H(L_{3/4}),\quad \text{for all } \mu \in (0,\frac{1}{2}]. $$
We now consider the energy level set $\Sigma_c := H^{-1}(c)$, for $c \in \mathbb{R}$.\ In view of the footpoint projection (\ref{projection}), the ``Hill's region'' of $\Sigma_c$ is defined as
$$ \mathscr{K}_c := \pi (\Sigma_c) \subset \mathbb{R}^3 \setminus \{ 0 \}, $$
which means that the Hill's region of the energy level set is its shadow under the footpoint projection.\ Since the first three terms in (\ref{hamiltonian_2}) are quadratic and hence non-negative, we can obtain the Hill's region by
$$ \mathscr{K}_c = \left\{ (x,y,z) \in \mathbb{R}^3 \setminus \{0\} \mid U(x,y,z) \leq c \right\}. $$
The topology of the Hill’s region depends on the energy level.\ If $c < H(L_{1/2})$, then the Hill's region has two connected components, one bounded and one unbounded (see Figure \ref{figure_3_hills_region}).\ We denote the bounded component by $\mathscr{K}_c^b$ and abbreviate by
\begin{align}\label{bounded_component}
	\Sigma_c^b := \pi^{-1} (\mathscr{K}_c^b) \cap \Sigma_c
\end{align}
the corresponding connected component of $\Sigma_c$.\
\begin{figure}[t]
	\centering
	\includegraphics[width=1\linewidth]{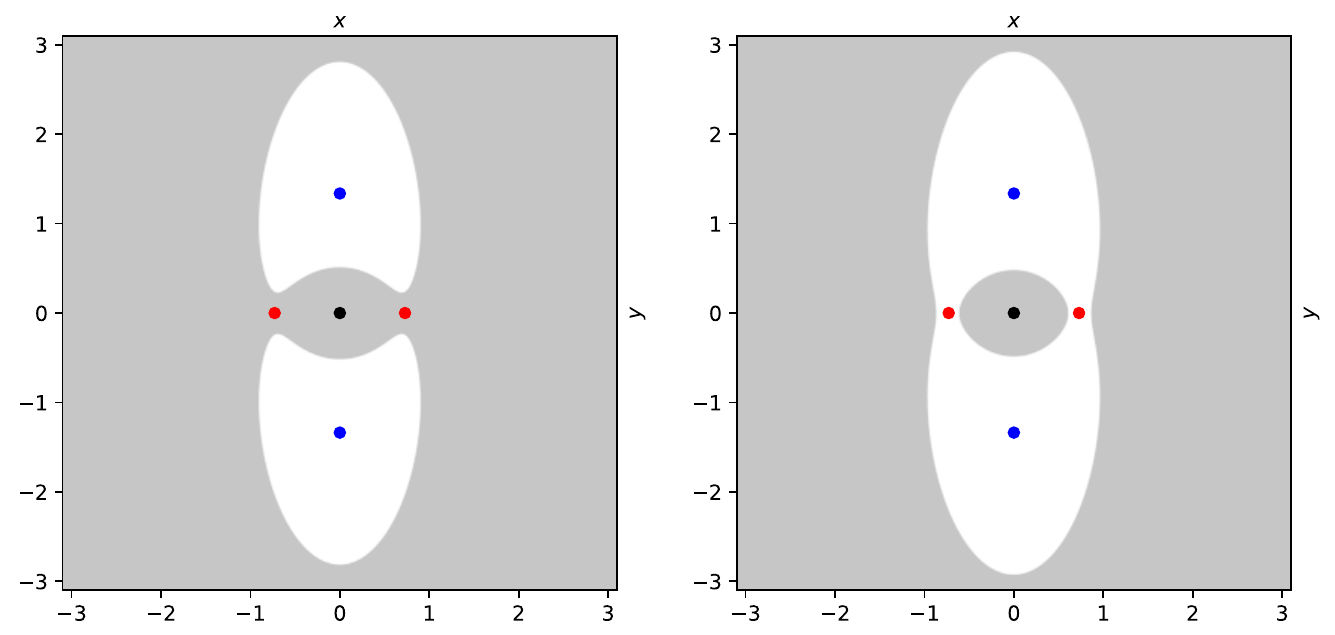}
	\caption{Hill's region (gray shaded domains) for planar problem $\{z=0\}$ for $\mu=0.2$.\ White domains correspond to forbidden regions.\ Red dots indicate $L_{1/2}$; blue dots indicate $L_{3/4}$.\ Right:\ For $c < H(L_{1/2})$.\ Left:\ For $H(L_{1/2}) < c < H(L_{3/4})$.\ In the Hill 3BP ($\mu=0$), when $L_{3/4}$ are sent to infinity, below the critical value the Hill's region consists of one bounded component and two unbounded components.}
	\label{figure_3_hills_region}
\end{figure}

\section{Contact property - Proof of Theorem \ref{theorem}}
\label{sec3}

\subsection{Basic notations}

We now recall some basic definitions and notations from contact geometry, and refer for details to \cite{geiges}.\

\begin{definition}
	Let $M$ be a smooth manifold of odd dimension $2n +1$.\ A ``contact form'' on $M$ is a 1-form $\alpha \in \Omega^1(M)$ such that $\alpha \wedge (d\alpha)^{\wedge n} \neq 0$.\ Given a contact form $\alpha$, the hyperplane field $\xi = \text{ker}\alpha \subset TM$ is oriented by $d\alpha$, and this oriented codimension-1 field is called the ``contact structure''.\ The pair $(M,\xi)$ is called ``contact manifold''.\ The ``Reeb vector field'' $R_{\alpha}$ is the unique vector field defined by the equations $ d \alpha (R_{\alpha}, \cdot) = 0$ and $\alpha(R) = 1$, whose flow is called ``Reeb flow''.\
\end{definition}
\begin{definition}
	A ``Liouville vector field'' $X$ on a symplectic manifold $(M,\omega)$ is a vector field satisfying $ \mathscr{L} _X \omega = \omega $, where $\mathscr{L}$ denotes the Lie derivative, i.e.,\ the Lie derivative along $X$ preserves $\omega$.\
\end{definition}
By Cartan's formula and the closedness of the symplectic form $\omega$, we have $ \mathscr{L} _X \omega = d \left( \iota_X \omega \right) + \iota_X d \omega = d \left( \iota_X \omega \right)$ and therefore, we can write the Liouville condition as $d \left( \iota_X \omega \right) = \omega$, where $\iota_X \omega (\cdot) = \omega(X,\cdot) $.\
\begin{example}
	The cotangent bundle $T^* Q$ of a smooth manifold $Q$ of dimension $n$ is endowed with the so-called ``Liouville one-form''.\ In local coordinates $(q_1,...,q_n)$ on $Q$ and dual coordinates $(p_1,...,p_n)$ on the fibers of $T^* Q$, the Liouville one-form is defined by $\lambda_{can} = \sum _{i=1} ^n p_i dq_i$.\ Since the standard symplectic form is characterized by $\omega_{can} = d \lambda_{can} = \sum_{i=1}^n dp_i \wedge dq_i$, the ``natural Liouville vector field'' $X$ on $T^* Q$ associated to $\lambda_{can}$ is defined by $\iota_X \omega_{can} = \lambda_{can}$.\ In local coordinates,
	$$X = \sum_{i=1}^n p_i \frac{\partial}{\partial p_i},$$
	that is, the radial vector field in fiber direction.\
\end{example}
\noindent
\textbf{Hypersurfaces of contact type.}\ Let $X$ be a Liouville vector field on a $2n+2$ dimensional symplectic manifold $(M,\omega)$.\ Then $\alpha := \iota_X \omega |_{\Sigma} $ is a contact form on any hypersurface $\Sigma \subset M$ transverse to $X$ (i.e.,\ with $X$ nowhere tangent to $\Sigma$).\ Such hypersurfaces are said to be of ``contact type''.\ To see this, let $x \in \Sigma$ and let $\{v_1,...,v_{2n+1} \}$ be a basis of $T_x\Sigma$.\ By using the Liouville condition we have,
\begin{align}\label{contact}
	\alpha \wedge (d \alpha)^{\wedge n} (v_1,...,v_{2n+1}) = \iota_X \omega \wedge \omega^{\wedge n} (v_1,...,v_{2n+1}) = \frac{1}{n} \omega ^{\wedge (n+1)} (X,v_1,...,v_{2n+1}).
\end{align}
Since $\{X,v_1,...,v_{2n+1}\}$ is a basis of $T_x M$ (due to transversality) and $\omega ^{\wedge (n+1)}$ is a volume form on $M$, we obtain that (\ref{contact}) is non-zero, i.e.,\ the contact condition is satisfied.\

Any hypersurface $\Sigma \subset M$ has a characteristic foliation $L$ which is a rank 1 foliation with $L_x = \text{ker} \left( \omega | _{T_x \Sigma} \right)$, for $x \in \Sigma$.\ If $\Sigma$ is a energy level set of a Hamiltonian $H \colon M \to \mathbb{R}$, then for $x \in \Sigma$ we have that $X_H(x) \in L_x$.\ If $\Sigma$ is of contact type, then $R_{\alpha}(x) \in L_x$, i.e., the Reeb flow of $\alpha$ is a reparametrization of the Hamiltonian flow.\ In the case of $M=T^* Q$, if the contact form on $\Sigma \subset T^*Q$ is induced by the transversality of the natural Liouville vector field $X$ on $T^*Q$, then the contact structure is called the ``standard contact structure'' determined by
$$\xi_{st} = \text{ker} \alpha_{can},\quad \alpha_{can} := \iota_X \omega_{can} | _{\Sigma} = \lambda_{can} | _{\Sigma}.$$
Moreover, in this case the energy hypersurface $\Sigma \subset T^* Q$ is ``fiberwise starshaped'', i.e.,\ for each point $q \in Q$ the intersection $\Sigma \cap T_q^*Q$ bounds a starshaped domain in the linear space $T_q^*Q$, which means that the natural Liouville vector field is transverse to each $\Sigma \cap T_q^*Q$.\

\subsection{Proof of transversality in non-regularized case}

We now consider the Liouville vector field on $T^* \mathbb{R}^3$ given by
\begin{align}\label{liouville_vector_field}
	X = x \frac{\partial}{\partial x} + y \frac{\partial}{\partial y} + z \frac{\partial}{\partial z}.
\end{align}
\begin{Proposition} \label{proposition}
	\textit{For any given $\mu \in [0,\frac{1}{2}]$ assume that $c < H (L_{1/2}) = - \frac{3}{2} \sqrt[3]{\lambda_2}$.\ Then the bounded component $\Sigma_c^b$ of the energy level set, as defined by~(\ref{bounded_component}), is transverse to $X$.}\
\end{Proposition}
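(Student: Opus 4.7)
The plan is to show $dH(X) > 0$ pointwise on $\Sigma_c^b$, which gives the required transversality since $X$ is tangent to $\Sigma_c^b$ precisely where $dH(X)$ vanishes. Direct computation yields $dH(X) = 2ax^2 + 2by^2 + z^2 + \tfrac{1}{r} + p_x y - p_y x$, and after passing to the ``rotating momenta'' $\tilde p := (p_x + y,\, p_y - x,\, p_z)$ (which satisfy $\|\tilde p\|^2 = 2(c-U)$ on $\Sigma_c$ by form (\ref{hamiltonian_2})) and inserting the definitions of $a, b$, this rearranges into the clean identity
\begin{align*}
dH(X) \;=\; 2U + \tfrac{3}{r} + \tilde p_x y - \tilde p_y x.
\end{align*}
A Cauchy--Schwarz bound on the last term reduces the problem to proving the position-only inequality $F(x,y,z) := 2U + \tfrac{3}{r} - \sqrt{x^2+y^2}\,\sqrt{2(c-U)} > 0$ on the bounded Hill region $\mathscr{K}_c^b$.

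A preliminary uniform bound on $r$ over $\mathscr{K}_c^b$ follows from combining $U \leq c$ with the trivial estimate $-U \leq \tfrac{1}{r} + \tfrac{\lambda_2 r^2}{2} =: \phi(r)$. The function $\phi$ attains its unique positive minimum at $r_* := \lambda_2^{-1/3}$ with value $|H(L_{1/2})| = \tfrac{3}{2}\sqrt[3]{\lambda_2}$, so the subcritical hypothesis $|c| > |H(L_{1/2})|$ produces two positive roots $r_- < r_* < r_+$ of $\phi(r) = |c|$, and connectedness forces $\mathscr{K}_c^b \subseteq \{r \leq r_-\}$. Multiplying $\phi(r_-) = |c|$ through by $r_-$ and using $r_- < r_*$ gives $r_-|c| < \tfrac{3}{2}$, which already establishes $F > 0$ on $\partial \mathscr{K}_c^b$ (where the square-root term vanishes and $F$ reduces to $2c + \tfrac{3}{r}$).

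For the interior I use two successive monotonicity reductions. Writing $F = \tfrac{1}{r} - Q - s\sqrt{2c + \tfrac{2}{r} + Q}$ with $Q := \lambda_2 x^2 + \lambda_1 y^2 - z^2$ and $s := \sqrt{x^2+y^2}$, the observation $\partial_Q F < 0$ pins the angular worst case to $y = 0$ (so $Q = \lambda_2 s^2 - z^2$), and a direct computation then yields $\partial_s F < 0$ at fixed $r$, pinning the remaining worst case to the positive $x$-axis ($s = r$, $z = 0$). The three-variable inequality thus collapses to the one-variable statement $\tfrac{1}{r} - \lambda_2 r^2 > r\sqrt{2c + \tfrac{2}{r} + \lambda_2 r^2}$ for $r \in (0, r_-]$. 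Squaring and clearing denominators produces the polynomial inequality
\begin{align*}
E(r, |c|) \;:=\; 1 - 2(\lambda_2+1)\,r^3 + \lambda_2(\lambda_2-1)\,r^6 + 2|c|\, r^4 \;>\; 0,
\end{align*}
and since $\partial_{|c|} E > 0$ it suffices to verify the inequality at the critical value $|c_0| := \tfrac{3}{2}\sqrt[3]{\lambda_2}$.

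The main obstacle is this final polynomial analysis, which I expect to close by the following key observation. Substituting $w := \lambda_2^{1/3} r$ transforms $E_0'(r)/(6r^2)$ (where $E_0 := E(\cdot, |c_0|)$) into the cubic $(\lambda_2 - 1) w^3 + 2w - (\lambda_2 + 1)$, which factors as $(w - 1)\bigl[(\lambda_2-1)w^2 + (\lambda_2-1)w + (\lambda_2+1)\bigr]$. The quadratic factor has negative discriminant for $\lambda_2 > 1$ (which holds since $\lambda_2 \geq \tfrac{9}{4}$ for all $\mu \in [0,\tfrac{1}{2}]$) and is therefore positive definite, so $r = r_*$ is the unique positive critical point of $E_0$ and a strict local minimum with $E_0(r_*) = 0$---a degenerate vanishing that precisely reflects the loss of transversality at the critical energy. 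Together with $E_0(0) = 1$ and $E_0(r) \to \infty$ at infinity, this forces $E_0 > 0$ on $(0, r_*)$, hence on $(0, r_-]$; chaining the monotonicity reductions backwards then gives $F > 0$ on all of $\mathscr{K}_c^b$ and completes the proof.
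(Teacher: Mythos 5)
Your proposal is correct, and it diverges from the paper's argument exactly where the real work happens. The first half is the same in substance: your identity $dH(X)=2U+\tfrac{3}{r}+\tilde p_x y-\tilde p_y x$ is the Cartesian form of the paper's $dH(X)=y(p_x+y)-x(p_y-x)+\rho\,\partial_\rho U$ (note $\rho\,\partial_\rho U=2U+3/\rho$), and both proofs then apply Cauchy--Schwarz and drop $p_z^2$ to reduce transversality to a position-only inequality on $\mathscr{K}_c^b$, namely $\partial_\rho U>\sin\varphi\,\sqrt{2(c-U)}$, which is your $F>0$ after dividing by $\rho$. The paper proves this by a comparison/integration trick: it picks $\tau$ with $U(\rho+\tau,\theta,\varphi)=c$ and integrates $\tfrac{d}{dt}(\partial_\rho U)^2$ using the two bounds $\partial_\rho U>0$ and $\partial^2_\rho U\le-\sin^2\varphi$ (Lemmas \ref{lemma_2_U_pos} and \ref{lemma_3_U2}), never locating a worst-case point. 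You instead eliminate the angular variables by two explicit monotonicity reductions and collapse everything to the sextic inequality $E(r,|c|)>0$, closed by the factorization of $E_0'$ and the identity $E_0(r_*)=0$. Both routes are sound; the paper's is shorter, while yours is more elementary and visibly sharp --- the degenerate zero $E_0(r_*)=0$ at the critical energy sits exactly at the Lagrange points $L_{1/2}=(\pm\lambda_2^{-1/3},0,0)$, where transversality genuinely fails. Two small points to make explicit in a write-up, neither of which is a gap: before squaring the reduced inequality you need $\tfrac{1}{r}-\lambda_2 r^2>0$, which follows from $r\le r_-<r_*=\lambda_2^{-1/3}$; and in the $Q$-monotonicity step one should note that every point of $\mathscr{K}_c^b$ satisfies $Q\ge -2c-\tfrac{2}{r}$, so the radicand stays nonnegative on the whole interval between the actual $Q$ and its angular maximum.
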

As a consequence of Proposition \ref{proposition}, $\iota_X \omega | _{\Sigma_c^b}$ defines a contact form on $\Sigma_c^b$.\ In order to prove Proposition \ref{proposition}, we need some properties of the effective potential (\ref{effective_potential}), which we formulate in three lemmas and discuss in spherical coordinates,
\begin{align*}
	x &= \rho \cos \theta \sin \varphi \\
	y &= \rho \sin \theta \sin \varphi \\
	z &= \rho \cos \varphi
\end{align*}
where $0 \leq \theta \leq 2\pi$, $0 \leq \varphi \leq \pi$.\ Since we consider energy level sets below the first critical value, the radius $\rho$ is always smaller than the distance from $L_{1/2}$ to the origin, which is $1 / \sqrt[3]{\lambda_2}$ and always less than~1.\ Therefore, we assume that the radius $\rho$ is smaller than 1.\ Now the effective potential (\ref{effective_potential}) reads
$$ U (\rho, \theta , \varphi) = - \frac{1}{\rho} - \frac{1}{2} \rho^2 ( \lambda_2 \cos^2 \theta \sin^2 \varphi + \lambda_1 \sin^2 \theta \sin^2 \varphi - \cos^2 \varphi ), $$
which is $\pi$-periodic in the variables $\theta$ and $\varphi$.\
\begin{lemma1}\label{lemma_1_U_min}
	\textit{For fixed radius $\rho \in (0,1)$ the function $U_{\rho} := U(\rho, \cdot, \cdot)$ has its minimum at $(\theta,\varphi) = (0,\frac{\pi}{2})$.}\
\end{lemma1}
\begin{proof}
	The differential is given by
	$$ dU_{\rho} (\theta,\varphi) = \rho^2 (\lambda_2 - \lambda_1) \cos \theta \sin \theta \sin^2 \varphi d \theta + \rho^2 \sin \varphi \cos \varphi (\lambda_2 \cos^2 \theta + \lambda_1 \sin^2 \theta + 1) d \varphi. $$
	Since $\lambda_2 > \lambda_1$, and the term $\lambda_2 \cos^2 \theta + \lambda_1 \sin^2 \theta + 1$ is strictly positive, we find four critical points at $(0,0)$, $(0,\frac{\pi}{2})$, $(\frac{\pi}{2})$ and $(\frac{\pi}{2},\frac{\pi}{2})$.\ The corresponding Hessians are given by
	\begin{align*}
		H_{U_{\rho}} (0,0) &= \begin{pmatrix}
			0 & 0 \\
			0 & -\rho^2(\lambda_2 + 1)
		\end{pmatrix},\quad
		H_{U_{\rho}} (0,\frac{\pi}{2}) = \begin{pmatrix}
			\rho^2 (\lambda_2 - \lambda_1) & 0 \\
			0 & \rho^2(\lambda_2 + 1)
		\end{pmatrix} \\		
		H_{U_{\rho}} (\frac{\pi}{2},0) &= \begin{pmatrix}
			0 & 0 \\
			0 & -\rho^2(\lambda_1 + 1)
		\end{pmatrix},\quad 		
		H_{U_{\rho}} (\frac{\pi}{2},\frac{\pi}{2}) = \begin{pmatrix}
			- \rho^2 (\lambda_2 - \lambda_1) & 0\\
			0 & \rho^2(\lambda_1 + 1)
		\end{pmatrix}.
	\end{align*}
	Therefore, the function $U_{\rho}$ attains its minimum at $(\theta,\varphi) = (0,\frac{\pi}{2})$.\
\end{proof}
We denote by $r := 1 / \sqrt[3]{\lambda_2}$ the distance from $L_{1/2}$ to the origin and introduce
$$ B_r(0) := \{ (x,y,z) \in \mathbb{R}^3 \colon x^2 + y^2 + z^2 \leq r^2 \} $$
the ball of radius $r$ centered at the origin.\
\begin{corollary1}\label{corollary}
	\textit{The bounded part of Hill's region, $\mathscr{K}_c^b$, is contained in $B_r(0)$.}\
\end{corollary1}
\begin{proof}
	Let $(\rho,\theta,\varphi) \in \partial B_r(0)$, i.e.,\ $\rho = r = 1 / \sqrt[3]{\lambda_2}$.\ Then, by Lemma \ref{lemma_1_U_min},
	\begin{align}\label{U>c}
		U(r,\theta,\varphi) \geq U(r,0,\frac{\pi}{2}) = - \frac{1}{r} - \frac{1}{2}r^2 \lambda_2 = - \frac{3}{2}\sqrt[3]{\lambda_2} = H(L_{1/2}) > c.
	\end{align}
	Therefore, $(r,\theta,\varphi)$ does not lie in $\mathscr{K}_c^b$, and hence, $\partial B_r(0) \cap \mathscr{K}_c^b = \emptyset$.\ Since $\mathscr{K}_c^b$ is connected and contains the origin in its closure, $\mathscr{K}_c^b$ is contained in $B_r(0)$.\
\end{proof}
\begin{lemma1}\label{lemma_2_U_pos}
	\textit{For every $(\rho,\theta,\varphi) \in B_r(0)$ with $\rho \in (0,r)$ it holds that $\frac{\partial U}{\partial \rho} (\rho,\theta,\varphi) > 0$.}\
\end{lemma1}
\begin{proof}
	Let $(\rho,\theta,\varphi) \in B_r(0)$ with $\rho \in (0,r)$.\ Since $\lambda_2 > \lambda_1$ we have the following equivalences
	\begin{align}\label{equivalence}
		(\lambda_1 - \lambda_2)\sin^2 \theta \leq 0 \quad \Leftrightarrow \quad \lambda_2(\cos^2 \theta - 1) + \lambda_1 \sin^2 \theta \leq 0 \quad \Leftrightarrow \quad \lambda_2 \cos^2 \theta + \lambda_1 \sin^2 \theta \leq \lambda_2.
	\end{align}
	By using (\ref{equivalence}), we estimate
	\begin{align} \label{partial_U_rho}
		\frac{\partial U}{\partial \rho} = \frac{1}{\rho^2} - \rho \left( \lambda_2 \cos^2 \theta \sin^2 \varphi + \lambda_1 \sin^2 \theta \sin^2 \varphi - \cos^2 \varphi \right) \geq \frac{1}{\rho^2} - \lambda_2 \rho > 0.
	\end{align}
	The last strict inequality holds since the function $f \colon (0,r) \to \mathbb{R}$, $x \mapsto \frac{1}{x^2} - \lambda_2 x$ is strictly positive on its domain.\
\end{proof}
\begin{lemma1} \label{lemma_3_U2}
	\textit{For every $(\rho,\theta,\varphi) \in B_r(0)$ with $\rho > 0$ it holds that $\frac{\partial^2 U}{\partial \rho^2} (\rho,\theta,\varphi) \leq - \sin^2 \varphi$.}\
\end{lemma1}
\begin{proof}
	Let $(\rho,\theta,\varphi) \in B_r(0)$ with $\rho > 0$.\ Since the function $f \colon (0,r] \to \mathbb{R}$, $x \mapsto - \frac{1}{x^3}$ takes the maximal value at $x=r$, and because $\lambda_2 \geq 2$, we estimate
	$$ \frac{\partial^2 U}{\partial \rho^2} = - \frac{2}{\rho^3} + \cos^2 \varphi - \sin^2 \varphi \left( \lambda_2 \cos^2 \theta + \lambda_1 \sin^2 \theta \right) \leq - \frac{2}{r^3} + 1 = -2 \lambda_2 + 1 \leq -3 \leq - \sin^2 \varphi. \qedhere $$
\end{proof}

\begin{proof}[Proof of Proposition \ref{proposition}]
We show that
\begin{align}\label{inequality_to_show_1}
	d H (X) | _{\Sigma_c^b} > 0.
\end{align}
The differential of the Hamiltonian (\ref{hamiltonian_1}) is given by
\begin{align}\label{dH}
	dH =  & p_x dp_x + p_y dp_y + p_z dp_z + p_x dy + y dp_x - p_y dx - x dp_y \\
	& + 2ax dx + 2by dy + z dz + \frac{x}{r^3} dx + \frac{y}{r^3} dy + \frac{z}{r^3} dz. \nonumber
\end{align}
By inserting the Liouville vector field (\ref{liouville_vector_field}) into (\ref{dH}) we obtain
\begin{align}\label{dHX}
	dH(X) = p_x y - p_y x + 2ax^2 + 2by^2 + z^2 + \frac{1}{r}.
\end{align}
Recall that $a = \frac{1}{2}(1-\lambda_2)$ and $b = \frac{1}{2}(1 - \lambda_1)$.\ In spherical coordinates the Liouville vector field (\ref{liouville_vector_field}) becomes
$$X = \rho \frac{\partial}{\partial \rho},$$
and (\ref{dHX}) reads
\begin{align}\label{dHX_2}
	dH(X) = & p_x \rho \sin \theta \sin \varphi - p_y \rho \cos \theta \sin \varphi + (1 + \lambda_2) \rho^2 \cos^2 \theta \sin^2 \varphi \\
	& + (1 - \lambda_1) \rho^2 \sin^2 \theta \sin^2 \varphi + \rho^2 \cos^2 \varphi + \frac{1}{\rho}. \nonumber
\end{align}
In view of $\frac{\partial U}{\partial \rho}$ from (\ref{partial_U_rho}), we write (\ref{dHX_2}) in the form
\begin{align*}
	dH(X) = \rho \sin \theta \sin \varphi (p_x + \rho \sin \theta \sin \varphi) - \rho \cos \theta \sin \varphi (p_y - \rho \cos \theta \sin \varphi) + \rho \frac{\partial U}{\partial \rho},
\end{align*}
which we estimate by using the Cauchy--Schwarz inequality,
\begin{align*}
	dH(X) &\geq \rho \frac{\partial U}{\partial \rho} - \rho \sin \varphi \sqrt{ (p_x + \rho \sin \theta \sin \varphi)^2 + (p_y - \rho \cos \theta \sin \varphi)^2 } \\
	& = \rho \frac{\partial U}{\partial \rho} - \rho \sin \varphi \sqrt{ 2 (H - U) - p_z^2 } \\
	& \geq \rho \frac{\partial U}{\partial \rho} - \rho \sin \varphi \sqrt{2 (H - U)}.
\end{align*}
Therefore, we have
$$ dH(X) | _{\Sigma_c^b} \geq \rho \left( \frac{\partial U}{\partial \rho} - \sin \varphi \sqrt{2(c-U)} \right). $$
Since the right hand side is independent of the momentum coordinates, to prove (\ref{inequality_to_show_1}) it is suffices to show that
\begin{align}\label{inequality_to_show_2}
	\left( \frac{\partial U}{\partial \rho} - \sin \varphi \sqrt{2(c-U)} \right) \bigg| _{\mathscr{K}_c^b} > 0.
\end{align}
Let $(\rho,\theta,\varphi) \in \mathscr{K}_c^b$.\ In particular, $ U(\rho,\theta,\varphi) \leq c$.\ By Corollary \ref{corollary}, we have $\rho < r$, and by (\ref{U>c}) it holds that $U(r,\theta,\varphi) > c$.\ Therefore, it exists $\tau \in [0,r-\rho)$ such that
$$ U(\rho + \tau, \theta, \varphi) = c. $$
By using Lemma \ref{lemma_2_U_pos} and Lemma \ref{lemma_3_U2} we obtain
\begin{align*}
	\left( \frac{\partial U}{\partial \rho} (\rho,\theta,\varphi) \right)^2 &= \left( \frac{\partial U}{\partial \rho} (\rho + \tau,\theta,\varphi) \right)^2 - \int_{0}^{\tau} \frac{d}{dt} \left( \frac{\partial U}{\partial \rho} (\rho + t,\theta,\varphi) \right)^2 dt \\
	&> -2 \int_{0}^{\tau} \frac{\partial U}{\partial \rho} (\rho + t,\theta,\varphi) \frac{\partial^2 U}{\partial \rho^2} (\rho + t,\theta,\varphi) dt \\
	&\geq 2 \sin^2 \varphi \int_{0}^{\tau} \frac{\partial U}{\partial \rho} (\rho + t,\theta,\varphi) dt \\
	&= 2 \sin^2 \varphi \left( U(\rho + \tau,\theta,\varphi) - U(\rho,\theta,\varphi) \right) \\
	&= 2 \sin^2 \varphi \left( c - U(\rho,\theta,\varphi) \right).
\end{align*}
Therefore, by using Lemma \ref{lemma_2_U_pos} once more, we imply
$$ \frac{\partial U}{\partial \rho} (\rho,\theta,\varphi) > \sin \varphi \sqrt{ 2 \left( c - U(\rho,\theta,\varphi) \right)}, $$
which shows (\ref{inequality_to_show_2}) and thereby the proposition.\
\end{proof}

\subsection{Moser-regularized energy level set and proof of transversality near the origin}

The Hamiltonian (\ref{hamiltonian_1}) has a singularity at the origin corresponding to collisions, thus the bounded component $\Sigma_c^b$ of the energy level set is non-compact.\ Moser \cite{moser} observed that the regularized Kepler problem coincides with the geodesic flow on the sphere endowed with its standard metric by interchanging the roles of position and momenta.\ To remove the singularity in our problem, we use the same concept as introduced by Moser.\

We abbreviate by $\mathbf{X} = (x,y,z)$ and $\mathbf{P}=(p_x,p_y,p_z)$ the corresponding position and momentum coordinates.\ We use a new time parameter $s$ and define for an energy value $c < H(L_{1/2}) = - \frac{3}{2}\sqrt[3]{\lambda_2}$ a new Hamiltonian by
$$ s = \int \frac{dt}{|\mathbf{X}|},\quad K_c(\mathbf{X},\mathbf{P}) := |\mathbf{X}| \left( H(\mathbf{X},\mathbf{P}) - c \right), $$
Notice that the flow of $H$ at energy level $c$ corresponds to the flow of $K_c$ at energy level $0$.\ Now we interchange the roles of position and momenta by the symplectic transformation mapping $(\mathbf{X},\mathbf{P})$ to $(-\mathbf{P},\mathbf{X})$.\ For simplicity of notation, we replace the new coordinates $\mathbf{X'}=-\mathbf{P}$ and $\mathbf{P'}=\mathbf{X}$ by $\mathbf{X}$ and $\mathbf{P}$.\ Then, the new transformed Hamiltonian $\widetilde{K}_c(\mathbf{X},\mathbf{P}) = K_c(-\mathbf{P},\mathbf{X})$ is explicitly given by
\begin{align}\label{hamiltonian_Kc}
	\widetilde{K}_c (\mathbf{X},\mathbf{P}) &= \frac{1}{2}|\mathbf{X}|^2|\mathbf{P}| + |\mathbf{P}|(p_x y - p_y x) - 1 + |\mathbf{P}|(a p_x^2 + b p_y^2 + \frac{1}{2}p_z^2) - |\mathbf{P}|c \\
	&= \frac{1}{2} \left( |\mathbf{X}|^2 + 1 \right)|\mathbf{P}| + (p_x y - p_y x)|\mathbf{P}| -1 + (a p_x^2 + b p_y^2 + \frac{1}{2}p_z^2)|\mathbf{P}| - (c+\frac{1}{2})|\mathbf{P}|. \nonumber
\end{align}
The next step is to use the stereographic projection which induces a symplectic transformation between  $T^*\mathbb{R}^3$ and $T^*S^3$ that extends $\widetilde{K}_c$ to a Hamiltonian on $T^*S^3$.\ Let $\xi = (\xi_0,\xi_1,\xi_2,\xi_3) \in \mathbb{R}^4$ with norm 1.\ We write a tangent vector $\eta \in T_{\xi}S^3$ as $\eta = (\eta_0,\eta_1,\eta_2,\eta_3)$, with inner product $(\xi,\eta)=0$.\ We identify $TS^3$ with $T^*S^3 \subset T^*\mathbb{R}^4$ by using the standard metric on $S^3$.\ Then, the symplectic transformation is given by
\begin{gather}\label{stereographic_symplectic}
	x = \frac{\xi_1}{1-\xi_0},\quad y = \frac{\xi_2}{1-\xi_0},\quad z = \frac{\xi_3}{1-\xi_0}, \\
	p_x = \eta_1(1-\xi_0) + \xi_1 \eta_0,\quad p_y = \eta_2(1-\xi_0) + \xi_2 \eta_0,\quad p_z = \eta_3(1-\xi_0) + \xi_3 \eta_0.\nonumber
\end{gather}
Notice that here $(x,y,z)$ represents the momentum and $(p_x,p_y,p_z)$ the position compared to the original picture before switching their roles.\ After this transformation, going to the North pole (where the momentum becomes infinite) corresponds to collision in the original picture (where the position becomes zero).\ Dynamically, at collision (going through the North pole) it bounces back.\ Therefore, Moser regularization is characterized by adding the fiber over the North pole.\ Moreover, the inverse transformation is given by
\begin{gather*}
	\xi_0 = \frac{|\mathbf{X}|^2 - 1}{|\mathbf{X}|^2 + 1},\quad \xi_1 = \frac{2x}{|\mathbf{X}|^2 + 1},\quad \xi_2 = \frac{2y}{|\mathbf{X}|^2 + 1},\quad \xi_3 = \frac{2z}{|\mathbf{X}|^2 + 1}, \\
	\eta_0 = \langle \mathbf{X},\mathbf{P} \rangle,\quad \eta_1 = \frac{|\mathbf{X}|^2+1}{2}p_x - \langle \mathbf{X},\mathbf{P} \rangle x,\quad \eta_2 = \frac{|\mathbf{X}|^2+1}{2}p_y - \langle \mathbf{X},\mathbf{P} \rangle y,\quad \eta_3 = \frac{|\mathbf{X}|^2+1}{2}p_z - \langle \mathbf{X},\mathbf{P} \rangle z,
\end{gather*}
and, in addition, we have the relation
\begin{align}\label{relation}
	|\eta| = \frac{1}{2}(|\mathbf{X}|^2 + 1)|\mathbf{P}| = \frac{|\mathbf{P}|}{1 - \xi_0}.
\end{align}
By inserting (\ref{stereographic_symplectic}) and (\ref{relation}) into (\ref{hamiltonian_Kc}), the transformed Hamiltonian on $T^*S^3$, which we denote by the same letter, is given by
\begin{align}\label{ham_K_c_xi_eta}
	\widetilde{K}_c (\xi,\eta) = |\eta| f(\xi,\eta) - 1,
\end{align}
where
\begin{gather*}
	f(\xi,\eta) := 1 + (\eta_1 \xi_2 - \eta_2 \xi_1)(1-\xi_0) + (a g_1^2 + b g_2^2 + \frac{1}{2} g_3^2)(1 - \xi_0) - (c + \frac{1}{2})(1 - \xi_0), \\
	g_k := g_k(\xi,\eta) := \eta_k(1 - \xi_0) + \xi_k \eta_0,\quad k=1,2,3.
\end{gather*}
By shifting and squaring the Hamiltonian (\ref{ham_K_c_xi_eta}) we obtain the new smooth Hamiltonian $Q(\xi,\eta)$ on a subset of $T^*S^3$,
\begin{align}\label{Hamiltonian_Q}
	Q(\xi,\eta) = \frac{1}{2}|\eta|^2 f(\xi,\eta)^2.
\end{align}
The level set $H^{-1}(c) = K_c^{-1}(0)$ is compactified to the level set $Q^{-1}(\frac{1}{2})$.\ Since $Q$ is smooth near this level set, we consider $Q^{-1}(\frac{1}{2})$ as the regularized problem.\ Since the only problem in compactness of $\Sigma_c^b$ comes from collisions with the origin, we consider points near the origin, i.e.,\ in view of (\ref{relation}), points $(\xi,\eta)$ satisfying
\begin{align}\label{points_near_origin}
	|\mathbf{P}|=|\eta|(1-\xi_0) < \varepsilon.
\end{align}
\begin{Proposition}\label{prop}
	\textit{For $\varepsilon > 0 $ small enough, the natural Liouville vector field on $T^*S^3$ given by}
	\begin{align} \label{liouville_v_f}
		X = \sum_{i = 0}^{3} \eta_i \frac{\partial}{\partial \eta_i},
	\end{align}
	\textit{is transverse to $Q^{-1}(\frac{1}{2})$ over points $(\xi,\eta)$ satisfying (\ref{points_near_origin}).}
\end{Proposition}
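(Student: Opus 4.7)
The plan is to prove transversality $dQ(X)>0$ on $Q^{-1}(\tfrac{1}{2})$ by direct computation, exploiting that $X$ is the Euler (radial) vector field in the fiber variable $\eta$. By Euler's identity, for any function $F$ homogeneous of degree $k$ in $\eta$ we have $X(F)=kF$. Applied to the building blocks of $f$ this gives $X(|\eta|^2)=2|\eta|^2$, $X(g_k)=g_k$, $X(g_k^2)=2g_k^2$ and $X(\eta_1\xi_2-\eta_2\xi_1)=\eta_1\xi_2-\eta_2\xi_1$, while the purely $\xi$-dependent factors $1$, $(1-\xi_0)$ and $(c+\tfrac{1}{2})(1-\xi_0)$ are annihilated. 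Collecting terms I would obtain
\[
X(f)=\bigl[(\eta_1\xi_2-\eta_2\xi_1)+2\bigl(ag_1^2+bg_2^2+\tfrac{1}{2}g_3^2\bigr)\bigr](1-\xi_0),
\]
so $X(f)$ carries a global factor $(1-\xi_0)$ and in particular vanishes on the north pole fiber $\{\xi_0=1\}$, which is precisely the divisor added by Moser regularization.

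Next I would apply the Leibniz rule to $Q=\tfrac{1}{2}|\eta|^2 f^2$, giving
\[
X(Q)=|\eta|^2 f^2+|\eta|^2 f\,X(f)=2Q+|\eta|^2 f\,X(f),
\]
so on the level set $Q^{-1}(\tfrac{1}{2})$ this reduces to $X(Q)=1+|\eta|^2 f\,X(f)$. At the north pole one has $f=1$ and $X(f)=0$, and $Q=\tfrac{1}{2}$ forces $|\eta|=1$; thus $X(Q)\equiv 1$ on the entire intersection of $Q^{-1}(\tfrac{1}{2})$ with $\{\xi_0=1\}$ (the unit cotangent sphere over the pole). The remaining task is a continuity/neighborhood argument. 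On the compact level set $Q^{-1}(\tfrac{1}{2})$ the function $|\eta|(1-\xi_0)$ is continuous with zero locus exactly $\{\xi_0=1\}$ (since $|\eta|f=1$ forces $|\eta|\neq 0$), so for small $\varepsilon$ the region $|\eta|(1-\xi_0)<\varepsilon$ is a neighborhood of the pole fiber. There $|\eta|f=1$ together with $f\to 1$ controls $|\eta|$ two-sidedly, the orthogonality $\langle\xi,\eta\rangle=0$ bounds each $\eta_i$ by $|\eta|$, and consequently the correction $|\eta|^2 f\,X(f)$ is $O(1-\xi_0)$ uniformly and can be made smaller than $1$ in absolute value by shrinking $\varepsilon$.

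The main obstacle I anticipate is not the algebra but verifying that compactness of $Q^{-1}(\tfrac{1}{2})$ and all the above bounds are uniform, both in $\xi\in S^3$ near the pole and in $\mu\in[0,\tfrac{1}{2}]$, so that a single $\varepsilon$ works. Compactness is, however, the very point of Moser regularization: the non-compactness of $\Sigma_c^b$ stems only from collision with the origin, which after switching position and momenta and stereographic projection becomes the smooth fiber over the north pole. Uniformity in $\mu$ follows because $a$, $b$ and $(c+\tfrac{1}{2})$ are bounded on the admissible parameter range while every quadratic-in-$\eta$ contribution to both $f$ and $X(f)$ is damped by the factor $(1-\xi_0)$ near the pole.
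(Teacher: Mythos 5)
Your proposal follows essentially the same route as the paper: the Euler-identity computation gives exactly the paper's formula $dQ(X)=2Q+|\eta|^2 f\,(1-\xi_0)\bigl(\eta_1\xi_2-\eta_2\xi_1+2ag_1^2+2bg_2^2+g_3^2\bigr)$, and the rest is the same ``correction term is small near the pole fiber'' estimate. The one place your sketch is thin is the two-sided control of $|\eta|$, i.e.\ the lower bound on $|f|$: the assertion that $f\to 1$ is not automatic from the factor $(1-\xi_0)$ alone, since $g_k^2$ is quadratic in $\eta$; the paper gets it by observing that $|g|=|\mathbf{P}|=|\eta|(1-\xi_0)<\varepsilon$ directly from the defining condition of the region (so $g_1^2(1-\xi_0)$ is small without first bounding $|\eta|$) and by exploiting the signs $a<0$, $b>0$, $c+\tfrac{1}{2}<0$ so that the remaining terms in $f$ only help. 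Also, your appeal to compactness of all of $Q^{-1}(\tfrac{1}{2})$ is neither needed (the proposition is stated over the region $|\eta|(1-\xi_0)<\varepsilon$, and all bounds follow from that condition plus the level-set identity $|\eta||f|=1$) nor obviously true for the full level set, so it is better dropped.
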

Notice that the Liouville vector field (\ref{liouville_vector_field}) on $T^*\mathbb{R}^3$ that we used for transversality in the unregularized case is mapped, via the composition of the symplectic transformation (\ref{stereographic_symplectic}) with the symplectic switch map, to the natural Liouville vector field (\ref{liouville_v_f}) on $T^*S^3$.\
\begin{proof}[Proof of Proposition \ref{prop}]
	We show that for $\varepsilon > 0$ small enough it holds that
	\begin{align}\label{to_show}
		dQ(X) | _{Q^{-1}(\frac{1}{2})} > 0.
	\end{align}
	The computation of $dQ(X)$, in view of (\ref{Hamiltonian_Q}) and (\ref{liouville_v_f}), yields
	\begin{align*}
		dQ(X) &= |\eta|^2 f(\xi,\eta)^2 + |\eta|^2 f(\xi,\eta) \sum_{i=0}^{3} \frac{\partial f}{\partial \eta_i} (\xi,\eta) \eta_i \\
		&= 2 Q + |\eta|^2 f(\xi,\eta) (1-\xi_0) ( \eta_1 \xi_2 - \eta_2 \xi_1 + 2a g_1^2 + 2b g_2^2 + g_3^2 ).
	\end{align*}
	In order to prove (\ref{to_show}), we first show that we can choose $\varepsilon > 0$ so small such that
	\begin{align}\label{bound_lower_f}
		|f(\xi,\eta)| \geq \frac{1}{2}.
	\end{align}
	Since the energy value $c < H(L_{1/2}) = -\frac{3}{2}\sqrt[3]{\lambda_2}$ is negative, and in fact less then $-\frac{3}{2}$, the quantity $c + \frac{1}{2}$ is negative as well.\ Notice from Figure \ref{figure_2_quantities} that $a<0$, $|a|\leq 1$ and $b>0$.\ Therefore, $b g_2^2 + \frac{1}{2} g_3^2 - (c + \frac{1}{2})$ is positive.\ By using these, we estimate
	\begin{align*}
		|f(\xi,\eta)| &= \left| 1 + (\eta_1 \xi_2 - \eta_2 \xi_1)(1-\xi_0) + (a g_1^2 + b g_2^2 + \frac{1}{2} g_3^2)(1 - \xi_0) - (c + \frac{1}{2})(1 - \xi_0) \right| \\
		&= \left| 1 + ( b g_2^2 + \frac{1}{2} g_3^2 - (c + \frac{1}{2}) )(1-\xi_0) + (\eta_1 \xi_2 - \eta_2 \xi_1)(1-\xi_0) + a g_1^2(1 - \xi_0) \right| \\
		&\geq 1 - |\eta_1 \xi_2 - \eta_2 \xi_1|(1-\xi_0) - |a|g_1^2(1-\xi_0) \\
		&\geq 1 - |\eta_1 \xi_2 - \eta_2 \xi_1|(1-\xi_0) - g_1^2(1-\xi_0).
	\end{align*}
	Furthermore, $|\eta_1 \xi_2 - \eta_2 \xi_1| \leq |\eta||\xi|$, and because $|\xi|=1$, we have in view of (\ref{points_near_origin}),
	\begin{align}\label{inequality_epsilon}
		|\eta_1 \xi_2 - \eta_2 \xi_1|(1-\xi_0) \leq |\eta|(1-\xi_0) < \varepsilon.
	\end{align}
	This implies,
	$$ |f(\xi,\eta)| \geq 1 - \varepsilon - g_1^2(1-\xi_0). $$
	If $\varepsilon$ approaches 0, then $\xi_0 \to 1$, which means that we can choose $\varepsilon$ so small such that (\ref{bound_lower_f}) holds.\ By using the level set condition $Q^{-1}(\frac{1}{2})$ together with the lower bound (\ref{bound_lower_f}) for $|f(\xi,\eta)|$, we find
	$$ \frac{1}{2} = Q(\xi,\eta) = \frac{1}{2} |\eta|^2 f(\xi,\eta)^2 \geq \frac{1}{2} |\eta|^2 \frac{1}{2}, $$
	which gives an upper bound for $|\eta|$, i.e.,\
	\begin{align}\label{bound_eta}
		|\eta| \leq 2.
	\end{align}
	We may write
	$$ dQ(X) \geq 2 Q - |\eta|^2 \left| f(\xi,\eta) \right| \left| (1-\xi_0) \left( \eta_1 \xi_2 - \eta_2 \xi_1 + 2a g_1^2 + 2b g_2^2 + g_3^2 \right) \right|. $$
	Notice that by (\ref{bound_eta}) we obtain
	$$|\eta||\eta||f(\xi,\eta)| \leq 2 \sqrt{ 2Q(\xi,\eta) } = 2 \sqrt{2 \frac{1}{2}} = 2, $$
	which implies, together with (\ref{inequality_epsilon}),
	\begin{align*}
		dQ(X) &\geq 1 -2 \left( \left| (1-\xi_0) (\eta_1 \xi_2 - \eta_2 \xi_1) \right| + \left| (1-\xi_0) (2a g_1^2 + 2b g_2^2 + g_3^2 ) \right| \right) \\
		&\geq 1 -2 \varepsilon \left(1 + |2a g_1^2 + 2b g_2^2 + g_3^2| \right).
	\end{align*}
	Since the latter term can be bounded by some constant $A$ on a compact set away from the origin, we obtain
	$$dQ(X) \geq 1 - 2\varepsilon (1 + A). $$
	Now we choose $\varepsilon$ sufficiently small such that $dQ(X)>0$, which proves (\ref{to_show}).\
\end{proof}


We have seen that for $c < H(L_{1/2})$ the bounded component $\Sigma_c^b$ of the energy level set can be Moser-regularized to form a compact 5-dimensional manifold $\widetilde{\Sigma}_c^b \subset T^*S^3$ which is diffeomorphic to $S^*S^3$.\ Since the Liouville vector field (\ref{liouville_vector_field}) on $T^*\mathbb{R}^3$ and the natural one (\ref{liouville_v_f}) on $T^*S^3$ coincide after Moser regularization, we obtain a Liouville vector field that is defined near the whole regularized level set, and in fact, it is the natural one.\ By the transversality results from Proposition \ref{proposition} and Proposition \ref{prop} we obtain that the natural Liouville vector field on $T^*S^3$ is transverse to $\widetilde{\Sigma}_c^b$, which means that $\widetilde{\Sigma}_c^b$ is fiberwise starshaped, and moreover, $\widetilde{\Sigma}_c^b \cong (S^*S^3,\xi_{st})$.\

For the planar problem, one can of course perform the same computation to obtain the same result.\ But since the planar problem corresponds to the restriction of the spatial system to the fixed point set of the symplectic symmetry $\sigma$ from (\ref{sigma}), the transversality result in the planar case follows immediately.\ This consequence is based on a general construction from \cite{aydin_sympl_splitting}.\ Namely, if a energy level set $\Sigma$ is of contact type and the entire system has a symplectic symmetry $\sigma$, such as~(\ref{sigma}), then the restriction of the contact form on $\Sigma$ to $\Sigma | _{\text{Fix}(\sigma | _{\Sigma})}$ is a contact form on $\Sigma | _{\text{Fix}(\sigma | _{\Sigma})}$.\ Therefore, we have the same result in the planar problem, in which we denote the Moser-regularized compact 3-dimensional manifold by $\widetilde{\Sigma}_c^b|_{\text{Fix}(\sigma)} \cong S^* S^2 \subset T^*S^2$.\ This completes the proof of Theorem \ref{theorem}.\\
\\
\textbf{Acknowledgement.}\ The author acknowledges support by the Deutsche Forschungsgemeinschaft (DFG, German Research Foundation), Project-ID 541062288, the Transregional Colloborative Research Center CRC/TRR 191 (281071066), and Germany’s Excellence Strategy EXC- 2181/1 - 390900948 (the Heidelberg STRUCTURES Excellence Cluster).\ Moreover, the author wish to thank the anonymous referee for bringing recent results on dynamics in low-dimensional contact topology to his attention.\

\addcontentsline{toc}{section}{References}

\end{document}